\newcommand{\Eff}{{\cal E}\! f\! f}
\newtheorem{proposition}{Proposition}[section]
\newtheorem{lemma}[proposition]{Lemma}
\newtheorem{corollary}[proposition]{Corollary}
\newtheorem{definition}[proposition]{Definition}
\newtheorem{theorem}[proposition]{Theorem}
\newtheorem{exrcise}{Exercise}
\newcounter{opgaveteller}
\newcounter{lijst-teller}
\newcounter{boon}
\newcounter{boon2}
\newenvironment{proof}{\noindent {\bf Proof}. \nopagebreak }{\nopagebreak\hfill\rule{2mm}{3mm}}
\newenvironment{rlist}%
   {\begin{list}{\roman{lijst-teller}\/)\hfil}%
              {\labelwidth 2em%
               \leftmargin\labelwidth\advance\leftmargin by\labelsep%
               \usecounter{lijst-teller}}}%
   {\end{list}}
\newenvironment{r'list}%
   {\begin{list}{\roman{lijst-teller}\/)$'$\hfil}%
              {\labelwidth 2em%
               \leftmargin\labelwidth\advance\leftmargin by\labelsep%
               \usecounter{lijst-teller}}}%
   {\end{list}}
\title{Realizability with a Local Operator of A.M.\ Pitts}
\author{Jaap van Oosten\footnote{Department of Mathematics, Utrecht University, P.O.\ Box 80.010, 3508 TA Utrecht, The Netherlands, {\tt j.vanoosten@uu.nl}}}
\date{January 2, 2013}
\begin{document}
\maketitle
\begin{abstract}We study a notion of realizability with a local operator $\cal J$ which was first considered by A.M.\ Pitts in his thesis \cite{PittsAM:thet}. Using the Suslin-Kleene theorem, we show that the representable functions for this realizability are exactly the hyperarithmetical ($\Delta ^1_1$) functions.

We show that there is a realizability interpretation of nonstandard arithmetic, which, despite its classical character, lives in a very nonclassical universe, where the Uniformity Principle holds and K\"onig's Lemma fails. We conjecture that the local operator gives a useful indexing of the hyperarithmetical functions.\end{abstract}

\section*{Introduction}
This short note collects a few results from an analysis of a notion of realizability with a local operator first identified by A.M.\ Pitts. The notions `local operator' and the realizability to which it gives rise are defined in sections 1 and 2, respectively. We refer to this as `$\cal J$-realizability'.

This $\cal J$-realizability was studied in \cite{LeeS:basse} where it was established that all arithmetical functions are `$\cal J$-representable' (again, for a definition see section 2). Here we sharpen this result and characterize the $\cal J$-representable functions as exactly the hyperarithmetical ($\Delta ^1_1$) functions.

We show that there is a $\cal J$-realizability interpretation of nonstandard arithmetic. This is in sharp contrast to ordinary Kleene realizability, where there cannot even exist a nonstandard model of intuitionistic $I\Sigma _1$: see \cite{McCartyD:varth,BergB:aricat}.

Despite these `classical' features of $\cal J$-realizability, it forms part of a very non-classical universe, in which for example the Uniformity Principle holds and K\"onig's Lemma fails.

We conjecture that Pitts' local operator gives a neat indexing of the hyperarithmetical functions, which could be fruitful in developing `recursion theory with hyperrarithmetical functions' (a topic touched upon in chapter 16 of the classic \cite{RogersH:therfe}).

The paper starts out as concretely as possible, in an effort to be accessible to any reader who is familiar with realizability and recursion theory. More general and conceptual, topos-theoretic comments are therefore relegated to a final section, which can be skipped without detriment to the reader's understanding of the technical material presented before.
\section{Notation and Preliminaries}
We assume a recursive coding of finite sequences; the code of a sequence $\sigma = (a_0,\ldots ,a_{n-1})$ is written $\langle a_0,\ldots ,a_{n-1}\rangle$; we have a recursive function {\sf lh} giving the length of a coded sequence, and recursive projections $(\cdot )_i$, such that the following equations hold:
$$\begin{array}{ll}(\langle a_0,\ldots ,a_{n-1}\rangle )_i = a_i & 0\leq i<n \\
\langle (s)_0,\ldots ,(s)_{{\sf lh}(s)-1}\rangle = s & \end{array}$$

\noindent For subsets $A,B$ of $\mathbb{N}$ we write $A\to B$ for the set of indices of partial recursive functions which map every element of $A$ to some element in $B$ (and in particular, are defined on every element of $A$). We write $A\wedge B$ for the set $\{\langle a,b\rangle\, |\, a\in A,b\in B\}$.

The partial recursive function with index $e$ is denoted $\phi _e$. We employ $\lambda$-notation: the expression $\lambda x.t$ denotes a standard index (obtained by the S-m-n theorem) for the (partial) function $x\mapsto t$.

\begin{definition}\label{mondef}\em A function ${\cal F}: {\cal P}(\mathbb{N})\to {\cal P}(\mathbb{N})$ is {\em (recursively) monotone\/} if the set
$$\bigcap_{A,B\subseteq\mathbb{N}}(A\to B)\to ({\cal F}A\to {\cal F}B)$$
is nonempty.

The set of monotone functions is preordered as follows: we write ${\cal F}\leq {\cal G}$ if the set $\bigcap_{A\subseteq\mathbb{N}}{\cal F}A\to {\cal G}A$ is nonempty.\end{definition}
\begin{definition}\label{locdef}\em A function ${\cal J}: {\cal P}(\mathbb{N})\to {\cal P}(\mathbb{N})$ is a {\em local operator\/} if the following sets are nonempty:
$$\begin{array}{lll} E_1({\cal J}) & = & \bigcap_{A,B\subseteq\mathbb{N}}(A\to B)\to ({\cal J}A\to {\cal J}B)  \\
E_2({\cal J}) & = & \bigcap_{A\subseteq\mathbb{N}}A\to {\cal J}A \\
E_3({\cal J}) & = & \bigcap_{A\subseteq\mathbb{N}}{\cal J}{\cal J}A\to {\cal J}A\end{array}$$\end{definition}
So, every local operator is a monotone function. Examples of local operators are: the function which maps every set to $\mathbb{N}$ (the {\em trivial\/} local operator) and the function which maps $\emptyset$ to $\emptyset$ and every nonempty set to $\mathbb{N}$ (the $\neg\neg$-operator).

It is left to the reader to verify that from elements of $E_1({\cal J}), E_2({\cal J}),E_3({\cal J})$ we can recursively obtain an element of
$$E_4({\cal J})\; =\; \bigcap_{A,B\subseteq\mathbb{N}}{\cal J}A\wedge {\cal J}B\to {\cal J}(A\wedge B)$$

The following theorem was proved in \cite{PittsAM:thet} and \cite{HylandJ:efft}.
\begin{theorem}[Hyland-Pitts]\label{hylandpitts} For any recursively monotone function $\cal F$ there is a least (w.r.t.\ the preorder on monotone functions) local operator $L({\cal F})$ with the property that ${\cal F}\leq L({\cal F})$.

An explicit formula for $L({\cal F})$ is
$$L({\cal F})A\; =\; \bigcap\{B\subseteq\mathbb{N}\, |\,\{ 0\}\wedge A\subseteq B\text{ and }\{ 1\}\wedge {\cal F}B\subseteq B\}$$\end{theorem}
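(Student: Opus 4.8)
The plan is to read an induction principle off the defining intersection and then to manufacture every realizer by Kleene's recursion theorem, routing each construction through a fixed realizer $m\in E_1({\cal F})$ of the recursive monotonicity of ${\cal F}$; throughout I write $a\cdot b$ for $\phi_a(b)$, associating to the left. Call $B$ \emph{$A$-closed} when $\{0\}\wedge A\subseteq B$ and $\{1\}\wedge{\cal F}B\subseteq B$. Since $\N$ is $A$-closed the intersection is well defined, and the definition yields at once the \emph{induction principle}: if $C$ is $A$-closed then $L({\cal F})A\subseteq C$. Two uniform realizers now drop out. The map $\eta=\lambda a.\langle 0,a\rangle$ satisfies $\langle 0,a\rangle\in\{0\}\wedge A\subseteq B$ for every $A$-closed $B$, so $\eta$ realizes $A\to B$ for all such $B$, and in particular $\eta\in A\to L({\cal F})A$, giving $E_2(L({\cal F}))$. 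Next, $L({\cal F})A\subseteq B$ as sets for every $A$-closed $B$, so $\id$ realizes $L({\cal F})A\to B$; hence the single index $m\cdot\id$ realizes ${\cal F}(L({\cal F})A)\to{\cal F}B$ uniformly in $B$, and $v=\lambda c.\langle 1,(m\cdot\id)\cdot c\rangle$ sends ${\cal F}(L({\cal F})A)$ into every $A$-closed $B$, i.e.\ $v\in{\cal F}(L({\cal F})A)\to L({\cal F})A$, uniformly in $A$.

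The feature to keep in mind is that ${\cal F}$ is only \emph{recursively} monotone, so the naive inclusion $\{1\}\wedge{\cal F}(L({\cal F})A)\subseteq L({\cal F})A$ genuinely fails and every passage under ${\cal F}$ must be mediated by $m$; this is the main friction in the argument. Granting this, ${\cal F}\leq L({\cal F})$ is immediate: since $\eta$ realizes $A\to B$ for all $A$-closed $B$, the index $m\cdot\eta$ realizes ${\cal F}A\to{\cal F}B$, so $\lambda c.\langle 1,(m\cdot\eta)\cdot c\rangle$ lands in $\bigcap B=L({\cal F})A$ and is visibly uniform in $A$.

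For the two remaining local-operator axioms and for minimality I would in each case define a realizer by the recursion theorem with two clauses mirroring the two closure conditions, set $C=\{n\mid\phi_g(n)\dar\in T\}$ for the appropriate target $T$, and check that $C$ contains the relevant $\{0\}\wedge(\text{base})$ and satisfies $\{1\}\wedge{\cal F}C\subseteq C$, so that the induction principle forces $g\in L({\cal F})(\text{base})\to T$. For $E_1(L({\cal F}))$, given $f\in A\to A'$ I take $\phi_g(\langle 0,a\rangle)=\langle 0,f\cdot a\rangle$ and $\phi_g(\langle 1,c\rangle)=v\cdot(m\cdot g\cdot c)$ with base $A$ and $T=L({\cal F})A'$, obtaining $g$ uniformly in $f$ and hence a realizer of $E_1(L({\cal F}))$. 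For $E_3(L({\cal F}))$ I take $\phi_\mu(\langle 0,d\rangle)=d$ and $\phi_\mu(\langle 1,c\rangle)=v\cdot(m\cdot\mu\cdot c)$ with base $L({\cal F})A$ and $T=L({\cal F})A$. For minimality, given a local operator ${\cal J}\geq{\cal F}$ with witnesses $j_2\in E_2({\cal J})$, $j_3\in E_3({\cal J})$ and $k\in\bigcap_A{\cal F}A\to{\cal J}A$, I take $\phi_w(\langle 0,a\rangle)=j_2\cdot a$ and $\phi_w(\langle 1,c\rangle)=j_3\cdot(k\cdot(m\cdot w\cdot c))$ with base $A$ and $T={\cal J}A$. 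In every case the $\langle 0,-\rangle$ clause delivers the base-closure condition, while for the $\langle 1,-\rangle$ clause one uses that the index realizes $C\to T$ to get $m\cdot g\cdot c\in{\cal F}T$, and then composes with $v$ in the first two cases, or with $k$ followed by $j_3$ in the last (noting $k\in{\cal F}({\cal J}A)\to{\cal J}({\cal J}A)$ as an instance), to re-enter $T$.

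I expect no deep obstacle beyond this uniform pattern; the real care is bookkeeping. One must check that each recursion-theoretic index is defined wherever the verification needs it, that the two closure conditions genuinely hold, and above all that none of $m,\eta,v,j_2,j_3,k$ secretly depends on $A$, so that the indices produced really inhabit the intersections $\bigcap_A(\cdots)$ defining the $E_i(L({\cal F}))$ and the preorder ${\leq}$.
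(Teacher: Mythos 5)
The paper never proves this theorem: it states it with an attribution to Pitts' thesis and Hyland's \emph{The effective topos}, so there is no in-paper argument to compare yours against line by line. Judged on its own, your proof is correct, and it is essentially the argument found in those sources (and in the paper's reference by Lee and van Oosten, where the same construction is worked out in this concrete, recursion-theoretic style): extract the induction principle from the impredicative intersection, build the structural realizers $\eta$ and $v$, and obtain every self-referentially specified realizer by Kleene's recursion theorem.

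The delicate points all check out. Your $v$ is sound and uniform because $\id\in L({\cal F})A\to B$ holds for \emph{every} $A$-closed $B$ simultaneously, so the single index $m\cdot\id$ lies in ${\cal F}(L({\cal F})A)\to{\cal F}B$ for all such $B$ at once, and the value $\langle 1,(m\cdot\id)\cdot c\rangle$ therefore lands in the intersection; the same pattern gives ${\cal F}\leq L({\cal F})$. In the recursion-theorem constructions there is no circularity: setting $C=\{n\mid \phi_g(n){\downarrow}\in T\}$ makes ``$g$ realizes $C\to T$'' true by definition, which is exactly what licenses $m\cdot g\in{\cal F}C\to{\cal F}T$ in the $\langle 1,-\rangle$ clause; definedness of $m\cdot g$ itself is automatic since $\phi_m$ is total (every index vacuously realizes $\emptyset\to B$); and all of $m,\eta,v,j_2,j_3,k$ are fixed independently of $A$, with $g$ depending recursively only on $f$, so the uniformity you promise in the last paragraph does hold. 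One small overstatement: the naive inclusion $\{1\}\wedge{\cal F}(L({\cal F})A)\subseteq L({\cal F})A$ does not ``genuinely fail'' in general --- it can hold, e.g.\ when ${\cal F}$ happens to preserve inclusions, as Pitts' own ${\cal F}$ does --- but it cannot be \emph{assumed} for a merely recursively monotone ${\cal F}$, which is the point your argument correctly respects.
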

For more on local operators, the reader is referred to \cite{LeeS:basse}.

\noindent In this paper, we shall deal with only one monotone function $\cal F$ and its associated local operator $L({\cal F})$. This function was defined by A.M.\ Pitts in \cite{PittsAM:thet}:
$${\cal F}A\; =\; \bigcup_{n\in\mathbb{N}}({\uparrow}n\to A)$$
where ${\uparrow}n$ is short for $\{ m\in\mathbb{N}\, |\, n\leq m\}$. Henceforth we write $\cal J$ for this $L({\cal F})$.

Pitts proved the following facts:
\begin{lemma}\label{pittslemma}~~~~~~~~~~~~~~~~~~~~~~~~\begin{rlist}
\item ${\cal J}\emptyset = \emptyset$
\item ${\cal J}\{ 0\} \cap {\cal J}\{ 1\} =\emptyset$
\item ${\cal J}$ preserves inclusions.\end{rlist}\end{lemma}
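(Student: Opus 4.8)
The plan is to unfold the Hyland--Pitts formula for $\mathcal{J}=L(\mathcal{F})$ into an explicit inductive definition and then argue by induction on the stage at which an element is generated. Write $\Phi_A(B) = (\{0\}\wedge A)\cup(\{1\}\wedge\mathcal{F}B)$; since $\mathcal{F}$ is monotone, $\Phi_A$ is monotone, and Theorem~\ref{hylandpitts} identifies $\mathcal{J}A$ with its least fixed point, obtained as the union of the stages $J^A_0=\emptyset$, $J^A_{\alpha+1}=\Phi_A(J^A_\alpha)$, $J^A_\lambda=\bigcup_{\beta<\lambda}J^A_\beta$. Unwinding the definition of $\mathcal{F}$, this says $\mathcal{J}A$ is the least set closed under two clauses: (a) $\langle 0,a\rangle\in\mathcal{J}A$ for every $a\in A$; and (b) if $\phi_c(m)\dar$ and $\phi_c(m)\in\mathcal{J}A$ for all $m$ in some tail $\uparrow n$, then $\langle 1,c\rangle\in\mathcal{J}A$. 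For $x\in\mathcal{J}A$ I write $\mathrm{rk}_A(x)$ for the least $\alpha$ with $x\in J^A_{\alpha+1}$; then in clause (b) the witnesses $\phi_c(m)$ have strictly smaller rank than $\langle 1,c\rangle$.

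Parts (iii) and (i) fall out immediately from this set-up. For (iii), if $A\subseteq A'$ then every $B$ closed for $A'$ is closed for $A$ (enlarging $A$ only enlarges the base clause), so the family intersected to define $\mathcal{J}A$ contains the one for $\mathcal{J}A'$, whence $\mathcal{J}A\subseteq\mathcal{J}A'$. For (i), note $\uparrow n$ is never empty, so no partial recursive function can map it into $\emptyset$; thus $\mathcal{F}\emptyset=\emptyset$ and $\Phi_\emptyset(\emptyset)=\emptyset$, making $\emptyset$ a fixed point, and hence $\mathcal{J}\emptyset\subseteq\emptyset$.

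The substance is in (ii). I would prove, by induction on $\mathrm{rk}_{\{0\}}(x)$, that no $x\in\mathcal{J}\{0\}$ lies in $\mathcal{J}\{1\}$. If $x$ enters $\mathcal{J}\{0\}$ through clause (a) then $x=\langle 0,0\rangle$, which is neither the base element $\langle 0,1\rangle$ of $\mathcal{J}\{1\}$ nor of the form $\langle 1,c\rangle$, so $x\notin\mathcal{J}\{1\}$. If $x=\langle 1,c\rangle$ enters through clause (b), with $\phi_c$ sending $\uparrow n$ into lower-rank elements of $\mathcal{J}\{0\}$, suppose toward a contradiction that $x\in\mathcal{J}\{1\}$; since $x\neq\langle 0,1\rangle$, it too must enter $\mathcal{J}\{1\}$ via clause (b), so the \emph{same} code $c$ sends some tail $\uparrow n'$ into $\mathcal{J}\{1\}$. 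Choosing any $m\geq\max(n,n')$ yields a single value $\phi_c(m)$ lying in both $\mathcal{J}\{0\}$ and $\mathcal{J}\{1\}$ but of rank $<\mathrm{rk}_{\{0\}}(x)$, contradicting the induction hypothesis.

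The crux, and the only place the particular shape of $\mathcal{F}$ is used, is this last step: membership of $\langle 1,c\rangle$ in either $\mathcal{J}\{i\}$ is controlled by the one code $c$, and because any two tails $\uparrow n$ and $\uparrow n'$ meet, a common argument $m$ produces a lower-rank element witnessing the overlap, driving the induction down to the leaves where the labels $0$ and $1$ are visibly distinct. I expect the only real care to be bookkeeping the rank inequality across successor and limit stages, and confirming that injectivity of the pairing keeps the two base elements and the two tag-classes ($0$-headed versus $1$-headed) genuinely disjoint.
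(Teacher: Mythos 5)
Your proof is correct, but there is nothing in the paper to compare it against: Lemma~\ref{pittslemma} is stated with the attribution ``Pitts proved the following facts'' and a citation of \cite{PittsAM:thet}, and no proof is given. Your argument supplies the missing proof in what is surely the intended way: unfold the intersection formula of Theorem~\ref{hylandpitts} into a transfinitely iterated least fixed point of $\Phi_A(B)=(\{0\}\wedge A)\cup(\{1\}\wedge{\cal F}B)$, and induct on the stage of generation. Parts (i) and (iii) indeed need only the intersection formula itself, and the heart of (ii) --- that the single code $c$ inside $\langle 1,c\rangle$ would have to send two tails ${\uparrow}n$ and ${\uparrow}n'$ into ${\cal J}\{0\}$ and ${\cal J}\{1\}$ respectively, so that any $m\geq\max(n,n')$ yields a common value of strictly smaller rank, contradicting the induction hypothesis --- is exactly the right use of the specific shape of ${\cal F}$, namely that any two tails meet. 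If you write this up, two points you flag only in passing should be made explicit: first, the passage from the least \emph{pre}-fixed point given by the intersection formula to the union-of-stages description, and to the inversion principle you invoke twice (every element of ${\cal J}A$ is either $\langle 0,a\rangle$ with $a\in A$, or $\langle 1,c\rangle$ with $\phi_c$ mapping some tail into a strictly earlier stage); this is Knaster--Tarski for the monotone $\Phi_A$ together with the observation that, since $J^A_0=\emptyset$ and limit stages are unions, every element first appears at a successor stage, which is also what makes the rank well defined. Second, the injectivity of the coding of pairs, which follows from the projection equations in Section 1, is what keeps $\langle 0,0\rangle$, $\langle 0,1\rangle$ and the $1$-tagged elements apart and guarantees that the code $c$ occurring in both membership facts is the same number. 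Both points are routine, so the proposal stands as a complete and correct proof.
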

From items i) and ii) it follows that $\cal J$ is not the $\neg\neg$-operator.

\noindent We reserve the letters {\sf a}, {\sf b}, {\sf c}, {\sf d}, {\sf e} for chosen elements of the following sets:
$$\begin{array}{lll} {\sf a} & \in & \bigcap_{A\subseteq\mathbb{N}}A\to {\cal J}A \\
{\sf b} & \in & \bigcap_{A,B\subseteq\mathbb{N}}(A\to B)\to ({\cal J}A\to {\cal J}B) \\
{\sf c} & \in & \bigcap_{A\subseteq\mathbb{N}}{\cal F}A\to {\cal J}A \\
{\sf d} & \in & \bigcap_{A\subseteq\mathbb{N}}{\cal J}{\cal J}A\to {\cal J}A \\
{\sf e} & \in & \bigcap_{A,B\subseteq\mathbb{N}}{\cal J}A\wedge {\cal J}B\to {\cal J}(A\wedge B)\end{array}$$

\noindent The following lemma was proved in \cite{LeeS:basse}.
\begin{lemma}\label{rijtjeslemma} For any total recursive function $F$ there is a partial recursive function $G$ (an index for which can be obtained recursively in an index for $F$), such that for every coded sequence $s=\langle a_0,\ldots ,a_{n-1}\rangle$ and every $n$-tuple $x_0,\ldots ,x_{n-1}$ such that $x_0\in {\cal J}\{ a_0\},\ldots ,x_{n-1}\in {\cal J}\{ a_{n-1}\}$, we have
$$G(\langle x_0,\ldots ,x_{n-1}\rangle )\; \in\; {\cal J}\{ F(s)\}$$\end{lemma}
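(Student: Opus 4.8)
The plan is to let $G$ operate \emph{blindly} on the realizers $x_0,\ldots,x_{n-1}$, never inspecting the hidden values $a_0,\ldots,a_{n-1}$ themselves; all the work is done by the uniform operators ${\sf e}$ and ${\sf b}$, which transform realizers without knowing what they realize. Throughout I write $u\cdot v$ for the Kleene application $\phi_u(v)$. On input $t=\langle x_0,\ldots,x_{n-1}\rangle$, the function $G$ is built so as to first recover $n={\sf lh}(t)$ and the entries $(t)_0,\ldots,(t)_{n-1}$, then fold them together with ${\sf e}$ into a single realizer, and finally transport that realizer along a recursive ``decode-and-apply-$F$'' map by means of ${\sf b}$.

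For the folding step, recall that ${\sf e}\in\bigcap_{A,B}{\cal J}A\wedge{\cal J}B\to{\cal J}(A\wedge B)$ and that $\{a\}\wedge\{b\}=\{\langle a,b\rangle\}$. I would fold from the left: put $p_1=a_0$ and $p_{k+1}=\langle p_k,a_k\rangle$, defining $y_1=(t)_0$ and $y_{k+1}={\sf e}\cdot\langle y_k,(t)_k\rangle$. Since $y_1=x_0\in{\cal J}\{a_0\}={\cal J}\{p_1\}$, a straightforward induction using the defining property of ${\sf e}$ yields $y_k\in{\cal J}\{p_k\}$ for all $k\le n$; hence after the whole list is processed we obtain $y_n\in{\cal J}\{p_n\}$, where $p_n=\langle\cdots\langle\langle a_0,a_1\rangle,a_2\rangle\cdots,a_{n-1}\rangle$ is the left-nested pairing of the entries of $s$. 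Note that $G$ computes $y_n$ from $t$ alone, even though $p_n$ is determined by the hidden $a_i$.

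The nested code $p_n$ is recursively interconvertible with the sequence code $s=\langle a_0,\ldots,a_{n-1}\rangle$ (the length $n$ is available to $G$), so there is a total recursive function $h_n$ with $h_n(p_n)=F(s)$, and an index $f_n=\psi(n)$ for $h_n$ can be produced uniformly from $n$ and an index for $F$ via the S-m-n theorem. Then $f_n\in\{p_n\}\to\{F(s)\}$, so ${\sf b}\cdot f_n\in{\cal J}\{p_n\}\to{\cal J}\{F(s)\}$, and setting $G(t)=({\sf b}\cdot\psi({\sf lh}(t)))\cdot y_n$ places $G(t)$ in ${\cal J}\{F(s)\}$, as required. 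The empty sequence $n=0$ is treated separately: there are no realizers, $F(\langle\rangle)$ is computable from an index for $F$, and ${\sf a}\cdot F(\langle\rangle)\in{\cal J}\{F(\langle\rangle)\}$ does the job, using ${\sf a}\in\bigcap_A A\to{\cal J}A$.

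I expect the only genuine labor to be bookkeeping rather than conceptual: packaging the folding recursion and the final application into a single index for $G$, and verifying that this index is obtained recursively in an index for $F$ — a dependence that enters solely through $\psi$. It is worth observing that neither ${\sf c}$ nor ${\sf d}$ is needed, since the fold keeps us at a single application of ${\cal J}$ throughout; the argument rests entirely on ${\sf e}$ and ${\sf b}$ (together with ${\sf a}$ for the base case) and on the recursiveness of the coding conversions.
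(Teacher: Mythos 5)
Your argument is correct. One point of order: the paper itself gives no proof of Lemma~\ref{rijtjeslemma} --- it is quoted from \cite{LeeS:basse} --- so there is no internal proof to compare against; your proof has to stand on its own, and it does. The two points that genuinely need checking both go through: first, the fold is computationally blind, since $y_{k+1}=\phi_{\sf e}(\langle y_k,(t)_k\rangle )$ depends only on the input $t$, while the claim $y_k\in {\cal J}\{ p_k\}$ is pure verification by induction, using $\{ p_k\}\wedge\{ a_k\} =\{\langle p_k,a_k\rangle\}$ and the defining property of ${\sf e}\in E_4({\cal J})$; second, because the length $n={\sf lh}(t)$ is visible to $G$, the unnesting map $h_n$ with $h_n(p_n)=F(s)$ is total recursive with an index $\psi (n)$ obtained by S-m-n uniformly in $n$ and an index for $F$, so $\phi _{\sf b}(\psi (n))$ carries ${\cal J}\{ p_n\}$ into ${\cal J}\{ F(s)\}$ and the composite lands where required; the dependence of $G$'s index on an index for $F$ enters only through $\psi$ and the $n=0$ clause, both recursive. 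Your separate treatment of $n=0$ via $\phi _{\sf a}(F(\langle\rangle ))$ is also right, and your observation that ${\sf c}$ and ${\sf d}$ are never needed is accurate: since every intermediate stays at a single layer of ${\cal J}$, no flattening is required. This induction-on-length argument, combining realizers pairwise with ${\sf e}$ and transporting along a recursive recoding with ${\sf b}$, is exactly the natural route one would expect the cited source to take, so nothing further is missing.
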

The following corollary is easy, and just stated for easy reference:
\begin{corollary}\label{rijtjescor} There are partial recursive functions $G$ and $H$ such that for $x_0\in {\cal J}\{ a_0\},\ldots ,x_{n-1}\in {\cal J}\{ a_{n-1}\}$ we have
$$\begin{array}{llll} G(\langle x_0,\ldots ,x_{n-1}\rangle ) & \in & {\cal J}\{ 0\} & \text{if for some $i<n$, $a_i=0$} \\
G(\langle x_0,\ldots ,x_{n-1}\rangle ) & \in & {\cal J}\{ 1\} & \text{otherwise}\\
H(\langle x_0,\ldots ,x_{n-1}\rangle ) & \in & {\cal J}\{ i\} & \text{if $i<n$ is least such that $a_i=0$}\\
H(\langle x_0,\ldots ,x_{n-1}\rangle ) & \in & {\cal J}\{ n\} & \text{if there is no such $i<n$}
\end{array}$$
\end{corollary}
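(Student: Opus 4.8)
The plan is to obtain both $G$ and $H$ as instances of Lemma~\ref{rijtjeslemma}, applied to two carefully chosen total recursive functions $F$. The key observation driving everything is that the conclusion of that lemma places the output of the resulting partial recursive function inside ${\cal J}\{F(s)\}$, where $s=\langle a_0,\ldots ,a_{n-1}\rangle$ is the code of the sequence of arguments \emph{realized} by the inputs $x_0,\ldots ,x_{n-1}$. So to meet each requirement of the corollary it suffices to arrange that $F(s)$ takes the prescribed value.

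For $G$ I would define the total recursive function $F_0$ by setting $F_0(s)=0$ if $(s)_i=0$ for some $i<{\sf lh}(s)$, and $F_0(s)=1$ otherwise. This is plainly computable: given the code $s$ one reads off ${\sf lh}(s)$ and inspects each component $(s)_i$ in turn. Feeding $F_0$ into Lemma~\ref{rijtjeslemma} yields a partial recursive $G$ with $G(\langle x_0,\ldots ,x_{n-1}\rangle )\in {\cal J}\{F_0(s)\}$ whenever $x_i\in {\cal J}\{a_i\}$ for all $i$. Since $F_0(\langle a_0,\ldots ,a_{n-1}\rangle )$ equals $0$ exactly when some $a_i=0$ and equals $1$ otherwise, this $G$ satisfies the first two lines of the claim.

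For $H$ I would define the total recursive function $F_1$ by taking $F_1(s)$ to be the least $i<{\sf lh}(s)$ with $(s)_i=0$ when such an $i$ exists, and $F_1(s)={\sf lh}(s)$ otherwise; this is again manifestly computable from $s$. Applying Lemma~\ref{rijtjeslemma} to $F_1$ produces the required $H$, because $F_1(\langle a_0,\ldots ,a_{n-1}\rangle )$ is precisely the least index $i<n$ with $a_i=0$, or else $n={\sf lh}(s)$ when no such $i$ occurs, matching the last two lines.

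There is essentially no obstacle to overcome: the entire working content has already been packaged into Lemma~\ref{rijtjeslemma}, and the corollary merely records two convenient specializations. The one point requiring a moment's care is to align the indexing conventions---confirming that the argument $s$ of the lemma encodes the sequence $(a_0,\ldots ,a_{n-1})$ realized by the input tuple $(x_0,\ldots ,x_{n-1})$---so that the desired memberships in ${\cal J}\{0\}$, ${\cal J}\{1\}$, ${\cal J}\{i\}$, and ${\cal J}\{n\}$ follow immediately from the lemma's conclusion once $F_0$ and $F_1$ are seen to be total recursive.
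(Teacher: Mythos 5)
Your proposal is correct and is precisely the argument the paper intends: the paper states the corollary without proof as an easy consequence of Lemma~\ref{rijtjeslemma}, and your two specializations (the characteristic-style function $F_0$ for $G$ and the least-zero-index function $F_1$ for $H$, both plainly total recursive) are exactly the right instances. Nothing is missing.
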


\section{$\cal J$-Assemblies and $\cal J$-realizability}
The category of $\cal J$-{\em assemblies} has as objects pairs $(X,E)$ where $X$ is a set and $E$ a function which assigns to every $x\in X$ a nonempty set $E(x)\subseteq\mathbb{N}$. A {\em morphism\/} of $\cal J$-assemblies $(X,E)\to (Y,F)$ is a function $f:X\to Y$ such that the set
$$\bigcap_{x\in X}E(x)\to {\cal J}F(f(x))$$
is nonempty; any element of this set is said to {\em track\/} the function $f$.

Morphisms can be composed: given $f:(X,E)\to (Y,F)$ and $g:(Y,F)\to (Z,G)$, tracked by $n$ and $m$ respectively, then
$$\lambda v.\phi _{\sf d}(\phi _{\phi _{\sf b}(m)}(\phi _{\sf n}(v)))$$
tracks $gf$, as is easy to check.

The category of $\cal J$-assemblies is cartesian closed: the product of $\cal J$-assemblies $(X,E)$ and $(Y,F)$ can be given as $(X\times Y,G)$ where $G(x,y)=E(x)\wedge F(y)$. The exponent $(Y,F)^{(X,E)}$ has as underlying set the set of morphisms from $(X,E)$ to $(Y,F)$; and assigns to such a morphism the set of its trackings. Moreover, the category has a {\em natural numbers object}: the object $N\; =\; (\mathbb{N},\{\cdot\} )$.

For a $\cal J$-assembly $(X,E)$, a {\em subobject\/} is given by a function $R:X\to {\cal P}(\mathbb{N})$ such that the set $\bigcap_{x\in X}R(x)\to{\cal J}E(x)$ is nonempty; this data determines a $\cal J$-assembly $(X',R)$, where $X'=\{ x\in X\, |\, R(x)\neq\emptyset\}$, and a monomorphism $(X',R)\to (X,E)$.

${\cal J}$-assemblies can be structures for a first-order language: suppose $(X,E)$ is a $\cal J$-assembly; suppose $n$-ary function symbols $f$ of the language are interpreted as morphisms $[f]:(X,E)^n\to (X,E)$, and $n$-ary relation symbols $R$ by subobjects $[R]$ of $(X,E)^n$ (thought of as maps $[R]:X^n\to {\cal P}(\mathbb{N})$). 

We have a notion of truth given by $\cal J$-{\em realizability}. We define, for a formula $\varphi (v_1,\ldots ,v_n)$ of the language and elements $x_1,\ldots ,x_n$ of $X$, what it means that a natural number $e$ $\cal J$-{\em realizes} $\phi (x_1,\ldots ,x_n)$:\begin{itemize}
\item[] $e$ $\cal J$-realizes $t=s(\vec{x})$ iff $e\in {\cal J}E(x_1)\wedge\cdots\wedge {\cal J}E(x_n)$ and $[t](\vec{x})=[s](\vec{x})$
\item[] $e$ $\cal J$-realizes $R(\vec{x})$ iff $e\in [R](\vec{x})$
\item[] $e$ $\cal J$-realizes $(\phi\wedge\psi )(\vec{x})$ iff $(e)_0$ $\cal J$-realizes $\phi (\vec{x})$ and $(e)_1$ $\cal J$-realizes $\psi (\vec{x})$
\item[] $e$ $\cal J$-realizes $(\phi\vee\psi )(\vec{x})$ iff either $(e)_0=0$ and $(e)_1$ $\cal J$-realizes $\phi (\vec{x})$, or $(e)_0\neq 0$ and $(e)_1$ $\cal J$-realizes $\psi (\vec{x})$
\item[] $e$ $\cal J$-realizes $(\phi\to\psi )(\vec{x})$ iff $(e)_0\in {\cal J}E(x_1)\wedge\cdots\wedge {\cal J}E(x_n)$ and for all $m$ such that $m$ $\cal J$-realizes $\phi (\vec{x})$, $\phi _{(e)_1}(m)$ is defined and is an element of ${\cal J}\{ k\, |\, k\text{ $\cal J$-realizes }\psi (\vec{x})\}$
\item[] $e$ $\cal J$-realizes $\exists x\phi (\vec{x})$ iff for some $a\in X$, $(e)_0\in {\cal J}E(a)$ and $(e)_1$ $\cal J$-realizes $\phi (a,\vec{x})$
\item[] $e$ $\cal J$-realizes $\forall x\phi (\vec{x})$ iff $(e)_0\in {\cal J}E(x_1)\wedge\cdots\wedge {\cal J}E(x_n)$ and for all $y\in X$ and all $k\in E(y)$, $\phi _{(e)_1}(k)$ is defined and an element of ${\cal J}\{ m\, |\, m\text{ $\cal J$-realizes }\phi (y,\vec{x})\}$
\end{itemize}
In particular, this can be applied to the natural numbers object $N$ and the language of arithmetic. It was proved in \cite{LeeS:basse} that an arithmetical sentence is true under $\cal J$-realizability (i.e., has a $\cal J$-realizer) precisely if it is classically true.

This theorem was based on considering $\cal J$-{\em decidable\/} subsets of $\mathbb{N}$, and $\cal J$-{\em representable\/} functions $\mathbb{N}\to\mathbb{N}$.
\begin{definition}\label{deciddef}\em A subset $A\subseteq\mathbb{N}$ is called $\cal J$-{\em decidable\/} if there is a total recursive function $F$ such that $F(n)\in {\cal J}\{ 0\}$ if $n\in A$, and $F(n)\in {\cal J}\{ 1\}$ if $n\not\in A$.

A function $f:\mathbb{N}\to\mathbb{N}$ is $\cal J$-{\em representable\/} if there is a total recursive function $F$ such that for all $n\in\mathbb{N}$, $F(n)\in {\cal J}\{ f(n)\}$.\end{definition}

In \cite{LeeS:basse} it was shown that every arithmetical subset of $\mathbb{N}$ is $\cal J$-decidable; the following theorem sharpens this result. 

Recall that a subset $A$ of $\mathbb{N}$ is $\Pi ^1_1$ if it can be defined in the language of second-order arithmetic by a formula $A=\{ x\, |\, \forall X\psi (X,x)\}$ where $\forall X$ is the only second-order quantifier in $\forall X\psi (X,x)$. A set is $\Sigma ^1_1$ if its complement is $\Pi ^1 _1$; and a set is {\em hyperarithmetical\/} or $\Delta ^1 _1$, if it is both $\Pi ^1 _1$ and $\Sigma ^1 _1$. A function $f:\mathbb{N}\to\mathbb{N}$ is hyperarithmetical if
$${\rm graph}(f)\; =\; \{\langle n,f(n)\rangle\, |\, n\in\mathbb{N}\}$$
is a hyperarithmetical set.
\begin{theorem}\label{hyperarth} The $\cal J$-decidable sets are precisely the hyperarithmetical sets, and the $\cal J$-representable functions are precisely the hyperarithmetical functions.\end{theorem}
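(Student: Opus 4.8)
The plan is to prove each of the two equalities by two opposite inclusions, and to treat the statement about representable functions as essentially a reformulation of the statement about decidable sets. A set $A$ is $\cal J$-decidable precisely when its characteristic function is $\cal J$-representable, and a function $f:\mathbb{N}\to\mathbb{N}$ is $\cal J$-representable precisely when its graph is $\cal J$-decidable; the passage between the two uses only the functoriality witness ${\sf b}$ (to push a recursive post-processing through ${\cal J}$) together with Lemma~\ref{rijtjeslemma} and Corollary~\ref{rijtjescor} (to convert an unbounded search for the value $f(n)$ into a single ${\cal J}$-realizer), and I regard it as routine. I would therefore prove the core assertion, that the $\cal J$-decidable sets are exactly the $\Delta^1_1$ sets, and read off the functional statement. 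Call the inclusion ``$\cal J$-decidable $\Rightarrow\Delta^1_1$'' the \emph{soft} direction and ``$\Delta^1_1\Rightarrow\cal J$-decidable'' the \emph{hard} direction.

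For the soft direction the key observation is that the explicit formula of Theorem~\ref{hylandpitts} exhibits ${\cal J}\{k\}$ as a $\Pi^1_1$ set, uniformly in $k$: unwinding it,
\[ x\in{\cal J}\{k\}\iff\forall B\subseteq\mathbb{N}\,\bigl[\bigl(\langle 0,k\rangle\in B\wedge\forall c(c\in{\cal F}B\to\langle 1,c\rangle\in B)\bigr)\to x\in B\bigr], \]
and the matrix is arithmetic in $B$ because $c\in{\cal F}B$ is the arithmetic condition $\exists n\,\forall m\geq n\,(\phi_c(m){\downarrow}\wedge\phi_c(m)\in B)$. Hence, if $A$ is $\cal J$-decidable via a recursive $F$, then $A=\{n\mid F(n)\in{\cal J}\{0\}\}$ is $\Pi^1_1$, and, using ${\cal J}\{0\}\cap{\cal J}\{1\}=\emptyset$ from Lemma~\ref{pittslemma}, its complement $\{n\mid F(n)\in{\cal J}\{1\}\}$ is $\Pi^1_1$ as well; so $A$ is $\Delta^1_1$. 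The same computation shows that the graph of a $\cal J$-representable $f$ is $\Pi^1_1$, and, since $\Pi^1_1$ is closed under number quantifiers \cite{RogersH:therfe}, the complement of the graph, $\{\langle n,m\rangle\mid\exists m'\neq m\,(F(n)\in{\cal J}\{m'\})\}$, is $\Pi^1_1$ too; thus the graph is $\Delta^1_1$ and $f$ is hyperarithmetical.

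The hard direction is where the Suslin--Kleene theorem enters: it tells us that every $\Delta^1_1$ set $A$ is hyperarithmetical in the hierarchy sense, so that $\chi_A$ is recursive in some $H_a$ with $a\in{\cal O}$, the $H$-sets being generated by transfinite iteration of the Turing jump along recursive well-orderings \cite{RogersH:therfe}. It therefore suffices to show that the $\cal J$-decidable sets are closed under (i) Turing reducibility to a $\cal J$-decidable set, (ii) the Turing jump, and (iii) the effective joins used at limit notations. The engine for (ii) is the following ``limit'' principle: if $h$ is recursive and eventually constant with value $v$, then $\lambda t.{\sf a}(h(t))$ lands in ${\cal J}\{v\}$ for all large $t$, hence lies in ${\cal F}({\cal J}\{v\})$, and therefore ${\sf d}({\sf c}(\lambda t.{\sf a}(h(t))))\in{\cal J}\{v\}$. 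This already makes $\emptyset'$ $\cal J$-decidable; iterating the principle yields all finite jumps, and it relativizes, for when the inner values are available only as ${\cal J}$-realizers (supplied by a decider for the oracle) one amalgamates the finitely many oracle queries with ${\sf e}$, pushes the recursive bookkeeping through ${\cal J}$ with ${\sf b}$, and contracts with ${\sf d}$. In this way (i) and (ii) follow, recovering in relativized, uniform form the result of \cite{LeeS:basse} that every arithmetical set is $\cal J$-decidable.

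The main obstacle is (iii), i.e.\ climbing past the finite levels to all of ${\cal O}$ by transfinite recursion along a recursive well-ordering. Here the inductive, least-fixed-point nature of ${\cal J}$ must be made to carry the transfinite induction: since membership in ${\cal J}\{v\}$ is witnessed by a well-founded generation tree whose nodes are either base markers $\langle 0,v\rangle$ or ${\cal F}$-steps $\langle 1,c\rangle$, a recursion of length $|a|<\omega_1^{\mathrm{CK}}$ along $\prec$ can be mirrored by a ${\cal J}$-realizer of commensurate generation-rank, the ``$\exists n\,\forall m\geq n$'' in ${\cal F}$ serving precisely to wait past a fundamental sequence at a limit notation. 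Concretely, I would use the recursion theorem to define an index that, at a notation $a$, recursively assembles the decider for $H_a$ from the deciders at $\prec$-predecessors, applying ${\sf c}$ to enter ${\cal J}$ through ${\cal F}$, ${\sf e}$ to amalgamate finitely many realizers, and ${\sf d}$ to collapse nested ${\cal J}$'s; termination and correctness are then exactly the content of transfinite induction on the well-founded ${\cal O}$. Matching the ``eventually above $n$'' quantifier of ${\cal F}$ to the enumeration of $\prec$-predecessors at limit stages is the delicate point, and I expect it to absorb most of the work. The soft direction guarantees that this construction cannot overshoot $\Delta^1_1$, so Suslin--Kleene pins the class of $\cal J$-decidable sets down to exactly $\Delta^1_1$.
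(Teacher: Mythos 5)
Your proposal is correct in substance, but your hard direction takes a genuinely different route from the paper's. The soft direction (the $\Pi^1_1$-ness of ${\cal J}\{k\}$ read off from the explicit formula of Theorem~\ref{hylandpitts}) coincides with the paper's; note only that your complement-of-graph computation needs ${\cal J}\{m\}\cap{\cal J}\{m'\}=\emptyset$ for arbitrary $m\neq m'$, which is not literally Lemma~\ref{pittslemma} ii) but follows from it by pushing a recursive map sending $m\mapsto 0$, $m'\mapsto 1$ through ${\sf b}$. For the hard direction, the paper never touches ${\cal O}$, the $H$-sets, or the jump: it invokes the Suslin--Kleene theorem in the form ``the $\Delta^1_1$ sets are the \emph{minimal} SK-class (effective $\sigma$-ring)'', so that it suffices to check that the $\cal J$-decidable sets, indexed as $C_e=\phi_e^{-1}({\cal J}\{0\})$ for $e\in C$, are closed under singletons, complements, and effective countable unions; the only nontrivial closure, unions, is handled by exactly your ``limit principle'' (an eventually constant recursive sequence of realizers is an element of ${\cal F}{\cal J}\{v\}$, which ${\sf c}$ and ${\sf d}$ collapse into ${\cal J}\{v\}$), applied to the sequence $G(\langle\phi_{\phi_e(0)}(x),\ldots,\phi_{\phi_e(n)}(x)\rangle)$ supplied by Corollary~\ref{rijtjescor}. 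You instead read Suslin--Kleene as ``$\Delta^1_1=\mathrm{HYP}$'' and rebuild the hyperarithmetical hierarchy inside the $\cal J$-decidable sets: relativized jump, Turing reducibility, joins at limit notations, recursion theorem, transfinite induction on ${\cal O}$. This is sound --- your limit principle together with Lemma~\ref{rijtjeslemma} does yield the relativized jump uniformly in an index for the oracle's decider, and the rest is standard --- but it is considerably more work, precisely because the minimal-SK-class formulation already packages the transfinite induction for you. One misplacement in your sketch: the delicate use of ${\cal F}$'s ``eventually above $n$'' quantifier occurs at the \emph{jump} (successor) stages, not at limit notations; at a limit $3\cdot 5^e$ the set $H_{3\cdot 5^e}$ is an effective join, and deciding $\langle u,v\rangle\in H_{3\cdot 5^e}$ needs only the single, recursively indexed decider for $H_{\phi_e(u)}$ applied to $v$, with no ${\cal F}$-trick at all. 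What your longer route buys is uniform closure of the $\cal J$-decidable sets under relativized jump and under Turing reducibility --- facts of independent interest that the paper's argument never establishes.
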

\begin{proof} Recall that ${\cal F}A=\bigcup_{n\in\mathbb{N}}{\uparrow}n\to A$, so ${\cal F}A$ is defined by an arithmetical formula in $A$. By the explicit formula for ${\cal J}=L({\cal F})$ given in theorem~\ref{hylandpitts}, we see that ${\cal J}A$ is defined by a formula
$${\cal J}A\; =\; \{x\, |\,\forall B(\psi (A,B)\to x\in B)\}$$
with $\psi (A,B)$ arithmetical in $A$. It follows that if $A$ is arithmetical, then ${\cal J}A$ is a $\Pi ^1_1$-set. In particular, ${\cal J}\{ 0\}$ is $\Pi ^1_1$. Hence, if $A\subseteq\mathbb{N}$ is $\cal J$-decided by the recursive function $F$ in the sense of definition~\ref{deciddef}, then $A=F^{-1}({\cal J}\{ 0\} )$, so also $\Pi ^1_1$. Since the complement of $A$ is $F^{-1}({\cal J}\{ 1\} )$ hence also $\Pi ^1_1$, it follows that $A$ is hyperarithmetical.

For the converse, in order to show that every hyperarithmetical set is $\cal J$-decidable, we consider the set
$$C\; =\; \{ e\, |\,\phi _e\text{ is total and for all $n\in\mathbb{N}$, }\phi _e(n)\in {\cal J}\{ 0\}\cup {\cal J}\{ 1\}\}$$
and the $C$-indexed collection of subsets of $\mathbb{N}$:
$$C_e\; =\; (\phi _e)^{-1}({\cal J}\{ 0\} )$$
Recall from lemma~\ref{pittslemma} that ${\cal J}\{ 0\}\cap {\cal J}\{ 1\} =\emptyset$, so the collection $\{ C_e\, |\, e\in C\}$ consists precisely of the $\cal J$-decidable sets. We need to show that it contains all $\Delta ^1_1$-sets.

This, in fact, is a straightforward application of the Suslin-Kleene Theorem (see \cite{OdifreddiP:clart,MoschovakisY:eleias}). We have to check that our collection $\{ C_e\, |\, e\in C\}$ is a so-called {\em SK-class\/} (\cite{OdifreddiP:clart}) or an {\em effective $\sigma$-ring\/} (\cite{MoschovakisY:eleias}). This means that we must exhibit partial recursive functions $\tau _1$, $\tau _2$ and $\sigma$ for which the following hold:\begin{rlist}\item For all $n$, $\tau _1(n)$ is defined and $C_{\tau _1(n)}=\{ n\}$
\item For all $e\in C$, $\tau _2(e)$ is defined and $C_{\tau _2(e)}=\mathbb{N}-C_e$
\item For every $e$ such that $\phi _e$ is total and $\phi _e$ takes values in $C$, $\sigma (e)$ is defined and
$$C_{\sigma (e)}=\bigcup_{n\in\mathbb{N}}C_{\phi _e(n)}$$\end{rlist}
The Suslin-Kleene theorem asserts that there is an indexing $\{ G_x\, |\, x\in G\}$ of the $\Delta ^1_1$-sets, which is the minimal SK-class (in an effective sense, which need not concern us here). So if we have proved i)--iii), it follows that $\{ C_e\, |\, e\in C\}$ contains all the $\Delta ^1_1$-sets.

For i) let $\chi _n(x)=\left\{\begin{array}{ll}0 & \text{if $x=n$}\\ 1 & \text{otherwise}\end{array}\right.$ and let $\tau _1(n)=\lambda x.\phi _{\sf a}(\chi _n(x))$

For ii) let $c$ be such that $\phi _c(0)=1$ and $\phi _c(1)=0$. Let $\tau _2(e)=\lambda x.\phi _{\phi _{\sf b}(c)}(\phi _e(x))$.

For iii)  let $G$ be a recursive function as in corollary~\ref{rijtjescor}. Now if $\phi _e$ is total and takes values in $C$, and $x\in\mathbb{N}$ is arbitrary, we have:\begin{itemize}
\item[] if $x\in\bigcup_{n\in\mathbb{N}}C_{\phi _e(n)}$ then
$$G(\langle \phi _{\phi _e(0)}(x),\ldots ,\phi _{\phi _e(n)}(x)\rangle )\in {\cal J}\{ 0\}$$
for $n$ large enough;
\item[] if $x\not\in\bigcup_{n\in\mathbb{N}}C_{\phi _e(n)}$ then
$$G(\langle \phi _{\phi _e(0)}(x),\ldots ,\phi _{\phi _e(n)}(x)\rangle )\in {\cal J}\{ 1\}$$
always.\end{itemize}
So if 
$$\chi (x)=\left\{\begin{array}{ll}0 & \text{if }x\in\bigcup_{n}C_{\phi _e(n)}\\ 1 & \text{else}\end{array}\right\}\;\;\text{and}\;\; \psi(e,x)\, =\, \lambda n.G(\langle \phi _{\phi _e(0)}(x),\ldots ,\phi _{\phi _e(n)}(x)\rangle )$$
then $\psi (e,x)\in {\cal F}{\cal J}\{\chi (x)\}$. So, let
$\sigma (e)\, =\, \lambda x.\phi _{\sf d}(\phi _{\sf c}(\psi (e,x)))$.
\medskip

\noindent For the statement about the $\cal J$-representable functions: clearly, if $f$ is $\cal J$-representable then ${\rm graph}(f)$ is a $\cal J$-decidable subset of $\mathbb{N}$, hence hyperarithmetical by the first part of the proof. Conversely, if ${\rm graph}(f)$ is $\cal J$-decidable we can find an index for a function which $\cal J$-represents $f$ by using the function $H$ from corollary~\ref{rijtjescor} in a way similar to what we have done in the first part, since $f(x)$ is the least $y$ such that $\langle x,y\rangle\in {\rm graph}(f)$.\end{proof}
\section{A $\cal J$-realizability interpretation of nonstandard arithmetic}
In \cite{SkolemT:niczme}, the first nonstandard model of Peano Arithmetic was constructed. Since the construction does not appear to be well-known and because elements of it are essential for what follows, we outline it here.

Let $\alpha _0,\alpha _1,\ldots$ be an enumeration of all arithmetical functions $\mathbb{N}\to\mathbb{N}$. We construct a strictly increasing function $\psi$ such that for all $i,j\in\mathbb{N}$ we have one of three possibilities: $\alpha _i\psi (n)<\alpha _j\psi (n)$ for almost all $n$, or $\alpha _i\psi (n)=\alpha _j\psi (n)$ for almost all $n$, or $\alpha _i\psi (n)>\alpha _j\psi (n)$ for almost all $n$.

In order to achieve this, one constructs a sequence $A_0\supset A_1\supset\cdots$ of infinite sets; each $A_k$ must have the property that for all $i,j\leq k$, $\alpha _i<\alpha _j$ on $A_k$ or $\alpha _i=\alpha _j$ on $A_k$ or $\alpha _i>\alpha _j$ on $A_k$. This is done as follows: let $A_0=\mathbb{N}$. Suppose inductively, that $A_k$ has been constructed and has the required property. Suppose that the restrictions of $\alpha _0,\ldots ,\alpha _k$ to $A_k$ are ordered as $\beta _1<\cdots <\beta _l$. Now $A_k$ can be written as a finite union
$$\begin{array}{lll}A_k & = & \{ x\in A_k\, |\,\alpha _{k+1}(x)<\beta _1(x)\} \\
& & \cup\{ x\in A_k\, |\,\alpha _{k+1}(x)=\beta _1(x)\} \\
& & \cup\{ x\in A_k\, |\,\beta _1(x)<\alpha _{k+1}(x)<\beta _2(x)\} \\
& & \cup\cdots \\
& & \cup\{ x\in A_k\, |\,\beta _l(x)<\alpha _{k+1}(x)\}\end{array}$$
Let $A_{k+1}$ be the first set in this list which is infinite. This completes the construction of the sequence $A_0\supset A_1\supset\cdots$.

Finally let $\psi$ be defined by: $\psi (0)=0$ and $\psi (k+1)$ is the least element of $A_{k+1}$ which is $>\psi (k)$.

The underlying set of Skolem's model is the set $\cal N$ of equivalence classes of arithmetical functions, where two such functions $\alpha$ and $\beta$ are equivalent if $\alpha\psi (n)=\beta\psi  (n)$ for $n$ large enough. We have an embedding $\iota :\mathbb{N}\to {\cal N}$ which sends $n$ to (the equivalence class of) the constant function with value $n$. We can extend an arithmetical function $\alpha :\mathbb{N}\to\mathbb{N}$ to $\cal N$ by putting $\alpha ([\beta ])=[\alpha\beta ]$; this is well-defined on equivalence classes, so $\cal N$ is a structure for the language of arithmetic; and $\iota$ is an elementary embedding since we can prove for any formula $\varphi (v_1,\ldots ,v_n)$ in the language of arithmetic and any $n$-tuple $[\beta _1],\ldots ,[\beta _n]$ of elements of $\cal N$, that ${\cal N}\models\varphi ([\beta _1],\ldots ,[\beta (n)])$ if and only if $\mathbb{N}\models\varphi (\beta _1\psi (k),\ldots ,\beta _n\psi (k))$ for almost all $k$.
\medskip

\noindent Now it is not hard to see that the whole construction, which needs an enumeration of all arithmetical functions and checking whether or not an arithmetical set is infinite, can be done recursively in a truth function for arithmetic, which is hyperarithmetical (see, e.g., \cite{RogersH:therfe}, 16-XI). Therefore, the function $\psi$ can be assumed to be $\cal J$-representable.

We can now endow the set $\cal N$ with the structure of a $\cal J$-assembly, by putting
$$E([\alpha ])\; =\; \{ e\, |\, \text{for some $\beta\in [\alpha ]$, $e$ represents $\beta\psi$}\} $$
For any arithmetical $\beta$, the map $[\alpha ]\to [\beta\alpha ]$ is well-defined and tracked, so the $\cal J$-assembly $\cal N$ is also a structure for the language of arithmetic. And again, we have an embedding $i:N\to {\cal N}$ of $\cal J$-assemblies, which is just $\iota$ on the level of sets.

By a straightforward application of the proof method in \cite{LeeS:basse} for the theorem that the $\cal J$-realizable sentences of arithmetic are exactly the classically true ones, one now obtains the following therem.
\begin{theorem}\label{Nelextthm} The map $i$ is an elementary embedding. For a formula $\varphi (v_1,\ldots ,v_n)$ and numbers $a_1,\ldots ,a_n$ the following four assertions are equivalent:\begin{rlist}
\item $\varphi (a_1,\ldots ,a_n)$ is true in the classical model $\mathbb{N}$
\item $\varphi (a_1,\ldots ,a_n)$ has a $\cal J$-realizer (in the sense of the assembly $N$)
\item $\varphi (i(a_1),\ldots ,i(a_n))$ has a $\cal J$-realizer (in the sense of the assembly $\cal N$)
\item $\varphi (i(a_1),\ldots ,i(a_n))$ is true in the classical model $\cal N$\end{rlist}
Moreover, the equivalence ii)$\Leftrightarrow$iii) is effective in realizers.

If $\alpha _1,\ldots ,\alpha _k$ are arithmetical functions then the following are equivalent:\begin{rlist}
\item $\varphi ([\alpha _1],\ldots ,[\alpha _n])$ is true in the classical model $\cal N$
\item $\varphi ([\alpha _1],\ldots ,[\alpha _n])$ has a $\cal J$-realizer
\item $\varphi (\alpha _1\psi (k),\ldots ,\alpha _n\psi (k))$ is true in $\mathbb{N}$ for almost all $k$\end{rlist}\end{theorem}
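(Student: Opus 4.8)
The plan is to obtain each of the two equivalence lists by combining two facts already in hand with a single new inductive argument. The first fact is the theorem of \cite{LeeS:basse}, which supplies i)$\Leftrightarrow$ii) of the first list: a closed instance $\varphi(a_1,\ldots,a_n)$ of an arithmetical formula is classically true in $\mathbb{N}$ exactly when it has a $\cal J$-realizer over the natural numbers object $N$. The second fact is Skolem's construction of $\cal N$ as recalled above, which yields simultaneously the classical elementary embedding $\iota\colon\mathbb{N}\to{\cal N}$ (this is i)$\Leftrightarrow$iv) of the first list) and the \L o\'s-style transfer principle that ${\cal N}\models\varphi([\alpha_1],\ldots,[\alpha_n])$ holds iff $\mathbb{N}\models\varphi(\alpha_1\psi(k),\ldots,\alpha_n\psi(k))$ for almost all $k$ (this is i)$\Leftrightarrow$iii) of the second list). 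The only genuinely new ingredient is to relate $\cal J$-realizability over the assembly $\cal N$ to these classical statements, and this I would do by re-running the induction of \cite{LeeS:basse}, now carried out over $\cal N$ in place of $N$.

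The conceptual reason the induction should succeed is that the operator ${\cal F}A=\bigcup_{n}({\uparrow}n\to A)$ captures precisely the quantifier \emph{for almost all} $k$: an element of ${\cal F}A$ is an index of a function which, from some threshold $n$ onwards, sends every argument into $A$. Since a realizer $e\in E([\alpha])$ represents $\beta\psi$ for some $\beta\in[\alpha]$, and since by \L o\'s the truth of $\varphi([\alpha_1],\ldots,[\alpha_n])$ in $\cal N$ is exactly the truth of $\varphi(\alpha_1\psi(k),\ldots,\alpha_n\psi(k))$ for almost all $k$, the $\cal F$-structure sitting inside ${\cal J}=L({\cal F})$ mirrors the Skolem equivalence clause by clause. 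I would therefore prove, by induction on $\varphi$, the biconditional that $\varphi([\alpha_1],\ldots,[\alpha_n])$ has a $\cal J$-realizer over $\cal N$ if and only if $\varphi(\alpha_1\psi(k),\ldots,\alpha_n\psi(k))$ holds in $\mathbb{N}$ for almost all $k$; together with the classical \L o\'s equivalence this delivers the entire second list, in particular its ii)$\Leftrightarrow$iii).

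In the base case, equations, one uses that equality on $\mathbb{N}$ is recursively decidable, so that from a representative of $\beta\psi$ one can decide at each stage $k$ whether $[t]=[s]$ holds there; the combinators {\sf c} (realizing ${\cal F}A\to{\cal J}A$) and {\sf d} then convert the eventually-constant decision into the required member of a $\cal J$-set, and {\sf a} handles the trivial direction. The propositional clauses follow from the induction hypothesis together with the pairing combinator {\sf e}, and the selection functions $G$ and $H$ of Corollary~\ref{rijtjescor} drive the analysis of disjunctions and of the bound-variable search in the quantifier clauses, exactly as over $N$ but with witnesses now indices representing functions $\delta\psi$. For the first list I would then note that the object $i(a)$ has, among its realizers, any index of the constant function $a$, since that function is unchanged by composition with $\psi$; threading a recursive reindexing through the clauses of the realizability definition then transforms a realizer of $\varphi(a_1,\ldots,a_n)$ over $N$ into one of $\varphi(i(a_1),\ldots,i(a_n))$ over $\cal N$ and back, which gives ii)$\Leftrightarrow$iii) effectively in realizers and exhibits $i$ as an elementary embedding.

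The step I expect to be the main obstacle is the universal quantifier clause over $\cal N$. There the classical reading of $\forall x\,\varphi$ unfolds, via \L o\'s, into: for every arithmetical $\delta$ the formula $\varphi(\delta\psi(k),\alpha_1\psi(k),\ldots,\alpha_n\psi(k))$ holds for almost all $k$ --- but the finite set of exceptional $k$ depends on $\delta$, whereas a single $\cal J$-realizer must act uniformly on the input index. Contracting these nested, $\delta$-dependent ``almost all'' thresholds into one realizer is exactly what cannot be done with $\cal F$ alone and is the reason the passage to the local operator ${\cal J}=L({\cal F})$ is needed: I expect to discharge it using idempotency, the combinator {\sf d} realizing ${\cal J}{\cal J}A\to{\cal J}A$, together with {\sf c}, so that an ``eventually eventually'' witness is collapsed to an ``eventually'' one. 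Making this collapse uniform and recursive, rather than merely asserting its existence, is the real heart of the argument.
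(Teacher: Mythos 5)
Your proposal takes essentially the same route as the paper: the paper's entire proof of this theorem is the single remark that it follows ``by a straightforward application of the proof method in \cite{LeeS:basse}'', given Skolem's classical construction, the $\cal J$-representability of $\psi$, and the assembly structure on $\cal N$ --- which is exactly the induction you outline, with $\cal F$ encoding ``for almost all $k$'' and ${\sf c},{\sf d}$ collapsing the eventual thresholds. The supporting details you sketch (deciding the \L o\'s truth sets inside $\cal J$ uniformly in the representing indices, via Lemma~\ref{rijtjeslemma} and Corollary~\ref{rijtjescor}, then flattening with ${\sf d}$) are the intended implementation, so your argument is a correct expansion of the paper's one-line proof.
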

The model $\cal N$ is in fact very classical: let St (the subobject of standard numbers) denote the image of $i:N\to {\cal N}$. Since the condition `$\alpha$ is bounded' is arithmetical in $\alpha$, we have:
\begin{proposition}\label{stvnonst} The statement $\forall y({\rm St}(y)\vee\neg {\rm St}(y))$ has a $\cal J$-realizer.\end{proposition}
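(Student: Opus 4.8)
The plan is to unwind the realizability clauses for the quantifier, the disjunction and the negation, reduce the whole statement to the $\cal J$-decidability of the predicate ``$[\beta]$ is standard'', and then feed that into Theorem~\ref{hyperarth}. An element of $\cal N$ presented to us is a class $[\beta]$ together with a number $k\in E([\beta])$, i.e.\ a $k$ with $\phi_k$ total and $\phi_k(n)\in{\cal J}\{\beta\psi(n)\}$ for all $n$. Writing out the clause for $\exists x\,(i(x)=y)$ one checks that ${\rm St}([\beta])$ is $\cal J$-realizable precisely when $[\beta]$ is standard, i.e.\ $\iota(a)=[\beta]$ for some $a\in\mathbb N$, and a realizer then consists of the witness $a$ (as an element of ${\cal J}\{a\}$) together with an element of ${\cal J}E([\beta])$, combined by ${\sf e}$. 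Dually, since $\bot$ has no realizer and ${\cal J}\emptyset=\emptyset$ (Lemma~\ref{pittslemma}), $\neg{\rm St}([\beta])$ is realizable exactly when $[\beta]$ is \emph{not} standard, a realizer then being just any element of ${\cal J}E([\beta])$ in the first coordinate (the universally quantified premise being vacuous). In both cases an element of ${\cal J}E([\beta])$ is available from the data as $\phi_{\sf a}(k)$. So, after unwinding the $\forall$-clause (whose side condition is empty, $y$ being the only variable), realizing the sentence amounts to producing, uniformly and recursively in $k\in E([\beta])$, an element of ${\cal J}\{m\mid m\text{ realizes }({\rm St}\vee\neg{\rm St})([\beta])\}$.

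Two sub-tasks remain. The first, which I expect to be \emph{easy}, is to manufacture the standard-case witness. If $[\beta]=\iota(a)$ then $\beta\psi$ is eventually equal to $a$, so for some $N$ we have $\phi_k(n)\in{\cal J}\{a\}$ for all $n\ge N$; that is, $k\in({\uparrow}N\to{\cal J}\{a\})\subseteq{\cal F}({\cal J}\{a\})$. Hence $\phi_{\sf d}(\phi_{\sf c}(k))\in{\cal J}\{a\}$ recovers the witness, totally uniformly in $k$ and with no search. The second sub-task, which is the \emph{main obstacle}, is to $\cal J$-decide standardness from $k$. Here I use that, in Skolem's model, $[\beta]$ is standard iff $\beta\psi$ is bounded (a bounded function that is comparable with every constant is eventually constant, by taking the least $a$ with $\beta\psi<a$ eventually), and that ``the function $\cal J$-represented by $\phi_k$ is bounded'' is a $\Delta^1_1$ predicate of $k$: since $x\in{\cal J}\{j\}$ is $\Pi^1_1$ by the proof of Theorem~\ref{hyperarth}, a quantifier count shows that both $\exists M\,\forall n\,\exists j\le M\,(\phi_k(n)\in{\cal J}\{j\})$ and its negation are $\Pi^1_1$. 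Theorem~\ref{hyperarth} then supplies a total recursive $D$ with $D(k)\in{\cal J}\{0\}$ when $[\beta]$ is standard and $D(k)\in{\cal J}\{1\}$ otherwise.

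Finally I would assemble the realizer. Recursively in $k$ one writes down the two candidate realizers $m_0$ (the ${\rm St}$-realizer built from $\phi_{\sf d}(\phi_{\sf c}(k))$, $\phi_{\sf a}(k)$ and ${\sf e}$, tagged with first coordinate $0$) and $m_1$ (the $\neg{\rm St}$-realizer built from $\phi_{\sf a}(k)$, tagged with a nonzero first coordinate). Let $\Lambda(k)$ be an index (by S-m-n) for $c\mapsto m_c$; then $\Lambda(k)\in\{c\}\to\{m_c\}$, so $\phi_{\phi_{\sf b}(\Lambda(k))}(D(k))\in{\cal J}\{m_c\}$, where $c$ is the actual truth value. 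Since ${\cal J}$ preserves inclusions (Lemma~\ref{pittslemma}) and $m_c$ does realize the disjunction, this lands in ${\cal J}\{m\mid m\text{ realizes }({\rm St}\vee\neg{\rm St})([\beta])\}$, as required, and the outer $\forall$-realizer is $\langle 0,\lambda k.\phi_{\phi_{\sf b}(\Lambda(k))}(D(k))\rangle$. The only non-routine ingredient is the $\Delta^1_1$ classification of the index set together with the appeal to Theorem~\ref{hyperarth}; everything else is combinator-chasing with ${\sf a},{\sf b},{\sf c},{\sf d},{\sf e}$.
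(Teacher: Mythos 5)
Your scaffolding is sound, and in places elegant: the unwinding of the $\forall$/$\vee$/$\neg$ clauses, the observation that ${\rm St}([\beta])$ (resp.\ $\neg{\rm St}([\beta])$) is realizable exactly when $[\beta]$ is (resp.\ is not) standard, the recovery of the standard witness from $k\in({\uparrow}N\to{\cal J}\{a\})\subseteq{\cal F}({\cal J}\{a\})$ via $\phi_{\sf d}(\phi_{\sf c}(k))\in{\cal J}\{a\}$, and the final assembly using ${\sf b}$ and preservation of inclusions are all correct. The gap is precisely at what you call the main obstacle. Your quantifier count is wrong: from ``$x\in{\cal J}\{j\}$ is $\Pi^1_1$'' it does follow that $\exists M\forall n\exists j\le M\,(\phi_k(n)\in{\cal J}\{j\})$ is $\Pi^1_1$, but its negation $\forall M\exists n\forall j\le M\,\neg(\phi_k(n)\in{\cal J}\{j\})$ is thereby only $\Sigma^1_1$; for it to be $\Pi^1_1$ you would need $x\in{\cal J}\{j\}$ to be $\Sigma^1_1$, hence $\Delta^1_1$. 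It is not. If the relations $x\in{\cal J}\{0\}$ and $x\in{\cal J}\{1\}$ were $\Delta^1_1$, then $C=\{e\,|\,\phi_e\text{ total and }\forall n\,(\phi_e(n)\in{\cal J}\{0\}\cup{\cal J}\{1\})\}$ and the relation ``$\phi_e(x)\in{\cal J}\{0\}$'' would be $\Delta^1_1$, hence so would $D=\{e\in C\,|\,\phi_e(e)\notin{\cal J}\{0\}\}$; by Theorem~\ref{hyperarth}, $D$ would be $\cal J$-decidable, say $D=C_{e_0}$ with $e_0\in C$, giving $e_0\in D\Leftrightarrow e_0\notin D$. (In fact ${\cal J}\{0\}$ is $\Pi^1_1$-complete, by coding well-founded recursive trees into it. Nor can you repair this by expressing unboundedness positively as $\forall M\exists n\exists j>M\,(\phi_k(n)\in{\cal J}\{j\})$: that makes both halves $\Pi^1_1$ on good indices, but disjoint $\Pi^1_1$ sets need not be $\Delta^1_1$-separable, since $\Pi^1_1$ has reduction, not separation.) So your index set is not $\Delta^1_1$, and Theorem~\ref{hyperarth} cannot be applied to it.

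The repair --- and this is the paper's route --- is to decide boundedness \emph{relative to the given codes} rather than as an index set of natural numbers. ``$f$ is bounded'' is an arithmetical predicate \emph{of the function} $f$, and the Lee--van Oosten proof that arithmetical sets are $\cal J$-decidable relativizes to function parameters: by induction on the arithmetical formula, using Lemma~\ref{rijtjeslemma}, Corollary~\ref{rijtjescor} and the combinators ${\sf c},{\sf d}$ to collapse ${\cal F}$ and ${\cal J}{\cal J}$, one obtains a single recursive $D$ such that \emph{whenever} $k$ $\cal J$-represents a function $f$ (no matter which), $D(k)\in{\cal J}\{0\}$ if $f$ is bounded and $D(k)\in{\cal J}\{1\}$ otherwise. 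This procedure manipulates the codes $\phi_k(n)\in{\cal J}\{f(n)\}$ handed to it; it never decides the predicate $x\in{\cal J}\{j\}$, and its correctness is asserted only on genuine representations $k\in E([\beta])$ --- which is all that is needed, since those are the only inputs the $\forall$-clause supplies. Your own sub-task 1 is exactly this style of code-relative argument; carrying it through the boundedness test is what closes the gap, after which the rest of your assembly goes through verbatim.
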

Nevertheless, the universe of $\cal J$-assemblies also has non-classical features. Just like in the category of ordinary assemblies, K\"onig's Lemma fails, and Cantor space and Baire space are isomorphic:
\begin{proposition}\label{noKL} In the category of $\cal J$-assemblies, the objects $2^N$ and $N^N$ are isomorphic. Hence, K\"onig's Lemma fails: there is a continuous but unbounded function $2^N\to N$.\end{proposition}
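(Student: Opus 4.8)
The plan is first to make the two objects explicit. The underlying set of $N^N$ is the set of morphisms $N\to N$, i.e.\ the $\cal J$-representable functions $\mathbb N\to\mathbb N$, and the underlying set of $2^N$ is the set of morphisms $N\to 2$, i.e.\ the characteristic functions of the $\cal J$-decidable subsets of $\mathbb N$; in each case the realizer-set attached to a point is its set of trackings. By Theorem~\ref{hyperarth} these two underlying sets are exactly the hyperarithmetical functions and the hyperarithmetical sets. Since both collections are countable, a set-theoretic bijection is automatic, so the entire content of the proposition is to produce such a bijection \emph{together with its inverse in $\cal J$-tracked form}.

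Next I would write down the two maps that do the job for ordinary assemblies and check that their trackings survive the passage to $\cal J$. In one direction, send a function $\alpha$ to the binary sequence built from unary blocks, $1^{\alpha(0)}0\,1^{\alpha(1)}0\cdots$; in the other, read a binary sequence back by locating successive $0$'s and counting the $1$'s in between. That the coding map is a morphism needs only that equality on $N$ is $\cal J$-decidable, which is the instance $F(\langle a,b\rangle)=0$ iff $a=b$ of Lemma~\ref{rijtjeslemma}. The decoding map is where the work lies: recovering $\alpha(n)$ means searching for the $(n{+}1)$-st marker, an \emph{unbounded} search, and locating that marker is exactly the $H$-function of Corollary~\ref{rijtjescor}. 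The resulting object is assembled into an element of ${\cal F}{\cal J}\{\alpha(n)\}$, to which one applies $\phi_{\sf d}\circ\phi_{\sf c}$ to land in ${\cal J}\{\alpha(n)\}$ --- precisely the combinator pattern used for the map $\sigma$ in the proof of Theorem~\ref{hyperarth}. Thus the realizability of unbounded search, which is the whole point of Pitts' operator ${\cal J}=L({\cal F})$, is what tracks the decoding.

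The main obstacle is that these two naive maps are not mutually inverse: the coding map is injective but not surjective, its image being the binary sequences with infinitely many $0$'s, and this image is not a decidable subobject of $2^N$, so it cannot simply be split off. Promoting the pair to a genuine isomorphism is the delicate step, and I would carry it out exactly as in the category of ordinary assemblies, where the correctness of the back-and-forth coding again reduces to the availability of the unbounded search just discussed; this is the step that encodes the failure of compactness. Granting the resulting isomorphism $\theta:2^N\to N^N$, the final clause is immediate: compose $\theta$ with the evaluation morphism $\mathrm{ev}_0:N^N\to N$, $\alpha\mapsto\alpha(0)$, which is tracked by $\lambda e.\phi_e(0)$ since any tracking $e$ of $\alpha$ already satisfies $\phi_e(0)\in{\cal J}\{\alpha(0)\}$. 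The composite $\mathrm{ev}_0\circ\theta:2^N\to N$ is then a morphism (hence continuous) whose underlying function is surjective onto $\mathbb N$, and so is unbounded; this is incompatible with König's Lemma, which would force every continuous function on the ``compact'' Cantor space $2^N$ to be bounded.
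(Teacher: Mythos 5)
Your identification of the points of $2^N$ and $N^N$ via Theorem~\ref{hyperarth}, the $\phi_{\sf d}\circ\phi_{\sf c}$ pattern for tracking a terminating unbounded search, and the final deduction (isomorphism $\Rightarrow$ failure of K\"onig's Lemma via $\mathrm{ev}_0$) are all sound. But there is a genuine gap exactly where you locate the ``delicate step'', and it cannot be closed by appealing once more to unbounded search. The unary-block coding can never be promoted to an isomorphism: its image misses recursive points such as $111\cdots$, and is a non-complemented proper subobject. What the ordinary-assemblies argument (van Oosten 3.2.26, Hyland 13.1--4) actually does is abandon that coding altogether in favour of one built from a \emph{Kleene tree}: a recursive, infinite, finitely branching tree $T$ with no infinite recursive branch. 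The set $E$ of minimal nodes outside $T$ is then an infinite recursive ``bar'' which every recursive binary sequence meets exactly once; this yields $2^N\cong N\times 2^N$, and decomposition/concatenation along $E$ yields $2^N\cong N^N$, the point being that the bar covers \emph{all} points of $2^N$, unlike the image of the unary coding. This tree is the key ingredient, and your proposal never names it.

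Moreover, in the $\cal J$-setting the transfer is genuinely not verbatim, because the ordinary Kleene tree is not good enough: the points of $2^N$ are now \emph{all hyperarithmetical} binary sequences, and the ordinary Kleene tree does have hyperarithmetical infinite branches (its leftmost branch is computable from the halting problem, hence $\Delta^0_2\subseteq\Delta^1_1$); at such a branch the decomposition search never terminates, so the would-be inverse is simply undefined at a point of the model. What is needed---and what the paper's proof consists of citing---is the stronger fact (\cite{RogersH:therfe}, Corollary 16-XLI(b)) that there exists a recursive, finitely branching, infinite tree with no infinite \emph{hyperarithmetical} branch. Note the direction of the logic here: this recursion-theoretic failure of K\"onig's Lemma is an \emph{input} to the construction of the isomorphism, not merely the corollary you draw at the end. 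The ${\cal F}/{\sf c}/{\sf d}$ machinery only tracks a search that is guaranteed to terminate on every point of $2^N$, and that guarantee is precisely what the tree provides; ``availability of unbounded search'' by itself proves nothing.
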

\begin{proof} This follows from the result (see \cite{RogersH:therfe}, Corollary 16-XLI(b)) that, analogous to the ordinary Kleene tree, there is a recursive, finitely-branching, infinite tree which has no infinite hyperarithmetical branch. The stated isomorphism now follows in a way similar to \cite{OostenJ:reaics}, 3.2.26 (see also \cite{HylandJ:efft}, 13.1--4).\end{proof}
\section{General comments and further work}
Just as ordinary Kleene realizability is the standard notion of truth in an elementary topos, the {\em effective topos\/} $\Eff$ of J.M.E.\ Hyland (\cite{HylandJ:efft}), $\cal J$-realizability is the standard notion of truth in a topos, a {\em subtopos\/} of the effective topos. Let us denote this subtopos by ${\Eff}_{\cal J}$.
The topos ${\Eff}_{\cal J}$ shares some features with $\Eff$: it is the free exact completion over the regular category of $\cal J$-assemblies. Every object is covered by a $\cal J$-assembly. The subobject classifier $\Omega$ is the object $({\cal P}(\mathbb{N}),=)$ where $[A=B]$ is the set $(A\to {\cal J}B)\wedge (B\to {\cal J}A)$. It is immediate that $\langle {\sf a},{\sf a}\rangle$ is an element of $[A=A]$ for all $A$, and this implies that the {\em Uniformity Principle\/} holds:
\begin{proposition}\label{upineffj}For any object $X$ which is a subquotient of $N$, the natural map $X\to X^{\Omega}$ is an isomorphism. In particular, this holds for the objects $N$, $N^N$, $\cal N$ and ${\cal N}^{\cal N}$.\end{proposition}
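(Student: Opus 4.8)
The plan is to identify the natural map $\nu_X\colon X\to X^{\Omega}$ with $X^{!}$, where $!\colon\Omega\to 1$ is the unique map, i.e.\ with the ``constant function'' map, and to show it is both monic and epic; since ${\Eff}_{\cal J}$ is a topos, hence balanced, this gives that $\nu_X$ is an isomorphism. Monicity is immediate and holds for \emph{every} object $X$: the top element $\top\colon 1\to\Omega$ is a global point with $!\circ\top=\id_1$, so the map $X^{\top}\colon X^{\Omega}\to X$ splits $\nu_X$ (because $X^{\top}\circ\nu_X=X^{!\circ\top}=\id_X$), and a split mono is monic. Thus the entire content is epicity of $\nu_X$, which is equivalent to the internal \emph{Uniformity Principle}
\[
\forall f\in X^{\Omega}\;\exists x\in X\;\forall p\in\Omega\;\bigl(f(p)=x\bigr),
\]
asserting, together with monicity, that every morphism $\Omega\to X$ is constant.

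The key is the uniformity already recorded for $\Omega=({\cal P}(\mathbb{N}),=)$: the single number $\langle{\sf a},{\sf a}\rangle$ lies in $[A=A]$ for \emph{every} $A\in{\cal P}(\mathbb{N})$, so all ``elements'' of $\Omega$ carry one and the same reflexivity/existence realizer $r_0=\langle{\sf a},{\sf a}\rangle$. From a realizer for a given $f\in X^{\Omega}$ I would extract the datum that provides, uniformly in a realizer $r$ of an element $p\in\Omega$, a ${\cal J}$-realizer of $f(p)\in X$; feeding it the fixed $r_0$ and closing up with the standard combinators ${\sf d}$ and ${\sf e}$ yields a \emph{single} number $w$ that realizes the value of $f$ at every point of $\Omega$ at once. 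In other words, one number witnesses $f(p)\in X$ uniformly in $p$.

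It then remains to see that a single realizer $w$ names a \emph{single} element of $X$, and this is exactly where the hypothesis that $X$ is a subquotient of $N$ is used: such an $X$ may be presented as a (``${\cal J}$-'')partial equivalence relation on $\mathbb{N}$, so that a realizer determines its element uniquely up to the equality of $X$ and distinct elements have ${\cal J}$-separated realizer-sets. For $X=N$ this is Lemma~\ref{pittslemma}(ii) together with its routine generalization ${\cal J}\{m\}\cap{\cal J}\{m'\}=\emptyset$ for $m\neq m'$, obtained from a recursive function separating $m$ and $m'$ via Lemma~\ref{rijtjeslemma} and Corollary~\ref{rijtjescor}. Hence $w$ pins down a unique $x\in X$ with $f(p)=x$ realized for all $p$, and packaging $w$ with ${\sf a}$ gives a realizer of $\exists x\,\forall p\,(f(p)=x)$. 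The stated cases $N$, $N^{N}$, $\cal N$ and ${\cal N}^{\cal N}$ are all subquotients of $N$, so the principle applies to each.

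The main obstacle is not the uniformity idea but the bookkeeping needed to make ``feed the common realizer $r_0$ to the tracker of $f$'' rigorous at the level of the topos, where $\Omega$ is not an assembly and $X^{\Omega}$ must be read through the realizability (tripos) semantics rather than a naive assembly exponential; here I would use that every object is covered by a $\cal J$-assembly to descend to concrete realizers. One must also verify carefully the ``single realizer determines a single element'' property for a general subquotient of $N$, and check that the local-operator combinators ${\sf d},{\sf e}$ do not destroy the ${\cal J}$-separation of distinct values, i.e.\ the generalization of Lemma~\ref{pittslemma}(ii). These are the points I would spell out in full; the remaining steps are routine combinator calculus of the kind already used for tracking composites in Section~2.
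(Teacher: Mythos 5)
Your proposal is correct and takes essentially the same route as the paper, which in fact offers no more proof than the single observation you also make central: $\langle{\sf a},{\sf a}\rangle\in[A=A]$ for every $A$, so a tracker of $f\in X^{\Omega}$ applied to this common realizer yields one number realizing the value of $f$ at every point of $\Omega$, and the ${\cal J}$-separation of realizer-sets in a subquotient of $N$ (the generalization of Lemma~\ref{pittslemma}(ii)) forces that value to be a single element. The details you defer --- the functional-relation bookkeeping and, more substantively, the fact that ${\cal J}$ preserves disjointness of arbitrary disjoint realizer-sets (needed already for ${\cal N}$, whose elements' realizer-sets are not singletons, and provable by induction over the inductive definition of $L({\cal F})$ in Theorem~\ref{hylandpitts}) --- are omitted from the paper as well, so your sketch is no less complete than the original.
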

Analogies between ${\Eff}_{\cal J}$ and $\Eff$ can also be drawn on the basis of an analysis of the (partial) hyperarithmetical functions and the indexing to which the local operator $\cal J$ gives rise: write $F=\psi _e$ if for every $n$: $n\in {\rm dom}(F)$ if and only if $\phi _e(n)\in {\cal J}\{m\}$ for some (necessarily unique) $m$, and $\phi _e(n)\in {\cal J}\{ F(n)\}$ if $n\in {\rm dom}(F)$.

One sees that ${\rm dom}(\psi _e)$ is a $\Pi ^1_1$-set; this is in accordance with the philosophy of `recursion theory with hyperarithmetical functions', that if the latter are analogous to recursive functions, the analogues of r.e.\ sets are the $\Pi ^1 _1$-sets (\cite{RogersH:therfe}, p.402).

We conjecture that (a subcollection of) the $\Pi ^1_1$-sets form a {\em dominance\/} in ${\Eff}_{\cal J}$ (\cite{RosoliniG:cet,OostenJ:reaics}) and that there is a model of Synthetic Domain Theory in this topos.

Finally, let us remark that the nonstandard model given here, should be compared with the model defined in \cite{MoerdijkI:minsta}, section 3. In both cases it is a model in a sheaf topos over $\Eff$, and there is an obvious similarity between (the monotone function generating) Pitts' local operator and the Fr\'echet filter in $\Eff$. But
our proposition~\ref{stvnonst} contrasts which what Moerdijk claims to hold in his model (proposition 3.1 in \cite{MoerdijkI:minsta}).

\begin{small}
\bibliographystyle{plain}

\end{small}

\end{document}